\newtheorem{theorem}{Theorem}[section]
\newtheorem{lemma}[theorem]{Lemma}
\newtheorem{corollary}[theorem]{Corollary}
\begin{document}
\title{\bf On the minimum order \\ of a quadrangulation \\ on a given closed 2-manifold}
\author{\Large \bf Serge Lawrencenko \medskip\\ \small Department of Higher Mathematics 1,
National Research\\ \small University of Electronic Technology,
Zelenograd, Russia\\ \small E-mail: lawrencenko@hotmail.com}
\renewcommand{\today}{\small August 26, 2012}
\maketitle

\begin{abstract}
A partial formula is provided to calculate the smallest number of vertices possible in a quadrangulation on the closed orientable 2-manifold of given genus. This  extends the previously known partial formula due to N. Hartsfield and G. Ringel [J. Comb. Theory, Ser. B, 1989, 46, 84-95]. 
\end{abstract}

\noindent \small {\bf Keywords:} polyhedron, orientable 2-manifold, quadrangulation, minimal quadrangulation, complete graph, Betti number of graph.
\smallskip

\noindent \small {\bf MSC Classification:} 05C10 (Primary); 57M15 (Secondary).

\section{Introduction}
\label{1}

A polyhedron $G \hookrightarrow \Sigma_g $ with 1-skeleton $G$ on the closed orientable 2-manifold $\Sigma_g$ of genus $g$  is called a {\it quadrangulation} if each face of the polyhedron is bounded by a cycle of four edges in $G$. The 1-skeleton $G$ is called the graph of the quadrangulation. A quadrangulation on $\Sigma_g$ is said to be {\it minimal} for $\Sigma_g$ if no quadrangulation on $\Sigma_g$ has fewer vertices. The number of vertices in a minimal quadrangulation on $\Sigma_g$ is denoted by ${|V|_{\min}(g)}$. It is surprising that no comprehensive formula is available thus far for ${|V|_{\min}(g)}$.

Current graphs and rotation schemes were used by Hartsfield and Ringel \cite {hr} to construct quadrangulations by the general octahedral graph $K_{p(2)}$ which is the complement of a 1-factor in the complete graph $K_{2p}$ ($p\ge 3$). These are all minimal (except for $K_{3(2)}$ on $\Sigma_1$), so that
\begin{equation} 
{|V|_{\min}(g)}={3+\sqrt{8g+1}}
\end{equation}
whenever $g={\frac 12 (p-1)(p-2)}$ with $p \ge 4$, in which case the square root is always an integer and, moreover, the value on the right hand side is an even integer ($=2p$). In this note, by using a different technique, called the {\it spinal method}, Hartsfield and Ringel's Formula (1) is extended as follows.

\begin{theorem}
For $g \ge 3$,
\begin{equation}
{|V|_{\min}(g)} = 2 \Big {\lceil} \frac {3+\sqrt{8g+1}} 2 \Big {\rceil}
\end{equation}
whenever
$$\Big {\lceil} \frac {3+\sqrt{8g+1}} 2 \Big {\rceil} = \Big {\lfloor}\frac {7+\sqrt{32g-15}} 4 \Big {\rfloor}.$$  
\end{theorem}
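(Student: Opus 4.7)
The plan is to establish $|V|_{\min}(g) = 2p$ with $p := \lceil (3 + \sqrt{8g+1})/2 \rceil$ by matching lower and upper bounds.

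\textbf{Lower bound.} Let $n = |V|$, $e = |E|$, $f = |F|$ in a putative quadrangulation of $\Sigma_{g}$. The quadrangulation condition $2e = 4f$ together with Euler's formula $n - e + f = 2 - 2g$ gives $e = 2n + 4g - 4$, and a polyhedral $1$-skeleton is simple, so $e \le \binom{n}{2}$. These combine into the master inequality $n^{2} - 5n + 8 \ge 8g$. I would then translate the two halves of the hypothesis: $\lceil (3+\sqrt{8g+1})/2 \rceil = p$ is equivalent to $4g \le 2p^{2} - 6p + 4$, consistent with $n = 2p$, while $\lfloor (7+\sqrt{32g-15})/4 \rfloor = p$ is equivalent to $4g \ge 2p^{2} - 7p + 8$. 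Substituting $n = 2p-1$ into the master inequality forces $4g \le 2p^{2} - 7p + 7$, contradicting the latter bound; smaller $n$ is more restrictive still. Hence $|V|_{\min}(g) \ge 2p$.

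\textbf{Upper bound.} Here I must realize a quadrangulation of $\Sigma_{g}$ on exactly $2p$ vertices. When $g = (p-1)(p-2)/2$---the largest genus permitted by $p$---the Hartsfield--Ringel embedding of $K_{p(2)}$ does the job via Formula~(1). For the remaining admissible values of $g$, I would invoke the ``spinal method'' alluded to in the introduction: begin with an HR-type quadrangulation as reference and modify its rotation scheme along a distinguished spine (a path in the $1$-skeleton), performing local surgeries that lower the genus by the appropriate amount while recombining quadrilateral faces so that the vertex count stays at $2p$, every face remains a $4$-cycle, and the $1$-skeleton stays simple.

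\textbf{Main obstacle.} The lower bound is essentially a repackaging of Euler's formula and the theorem's arithmetic hypothesis. The substantive difficulty is the upper bound: the spinal surgery must (i) produce a \emph{polyhedral} embedding (no multi-edges, each face bounded by a \emph{simple} $4$-cycle) and (ii) be flexible enough to realize \emph{every} admissible $g$, not only the HR value. The care needed lies in the local rotation scheme near the spine, where the quadrilateral faces adjacent to the spine must be reknit without creating coincidences; this is precisely where the paper must provide a genuine new construction beyond~(1).
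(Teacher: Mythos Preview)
Your lower bound is correct and is essentially the same Euler-formula count the paper uses (the paper records it as $\alpha_0 \ge \lceil \tfrac12(5+\sqrt{32g-7})\rceil$, which is equivalent to your $n^2-5n+8\ge 8g$).

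The gap is in the upper bound: you have misread what the ``spinal method'' is. It is \emph{not} surgery along a path in an existing HR embedding. In the paper, a \emph{spine} is simply a connected graph $G$, and the key input is the theorem of White and Craft (Theorem~2.1) that for every non-trivial connected $G$ the $2$-fold interlacement $G[:]$ quadrangulates $\Sigma_{\beta(G)}$. With this in hand the construction is one line: set $m=\beta(K_p)-g$ (which is $\ge 0$ because $p=\lceil a(g)\rceil$ forces $g\le\beta(K_p)$), choose any connected $K_p-me$, and apply Theorem~2.1 to get a quadrangulation $(K_p-me)[:]\hookrightarrow\Sigma_g$ on $2p$ vertices. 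No local reknitting of rotation schemes is needed, and the polyhedrality concerns you raise do not arise. The paper packages exactly this as Lemma~3.3, and the proof of Theorem~1.1 then just checks that the minimality clause of that lemma, namely $p\ge 4(m+1)$, is equivalent to $p\le b(g)=\tfrac14(7+\sqrt{32g-15})$; combined with $p\ge a(g)$ this is precisely the hypothesis $\lceil a(g)\rceil=\lfloor b(g)\rfloor$. So your proposed surgery program is aimed at a difficulty that the actual construction bypasses entirely.
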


Theorem 1.1 will be proved at the end of this note. In Section 3 we'll see that there are infinitely many values of $g$ to which Formula (2) applies but Formula (1) doesn't. For example, Formula (2) gives ${|V|_{\min}(53)} = 24$ whereas Formula (1) does not apply to $g=53$. 

\section{Spinal quadrangulations}

The {\it (first) Betti number} of a connected graph $G$ is given by $\beta (G)=|E(G)|-|V(G)|+1$, where $|V(G)|$ and $|E(G)|$ stand for the cardinalities of the vertex and edge sets of $G$ (respectively). The cardinality $|V(G)|$ is also called the {\it order} of $G$. Especially, the complete graph $K_p$ of order $p$ has $\beta(K_p)=\frac 12 (p-1)(p-2)$. 

The {\it 2-fold interlacement} of $G$ is denoted by ${G [:]}$ and is defined to be the graph which has vertex set ${V(G[:])}=V(G') \cup V(G'')$, where  $G'$ and $G''$ are two disjoint copies of $G$, and as edges the set ${E(G[:])}=E(G') \cup E(G'')$ plus the edges that join each vertex $v' \in V(G')$ to each vertex in $V(G'')$ that is adjacent (in $G''$) to the corresponding vertex $v'' \in V(G'')$. For instance, $K_p[:] = K_{p(2)}$ ($p\ge 2$).

\begin{theorem} 
{\bf (White \cite {w}, Craft \cite {c})}
For any non-trivial connected graph $G$, there exists a quadrangulation ${G[:]} \hookrightarrow \Sigma_{\beta(G)}$.
\end{theorem}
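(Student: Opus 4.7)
\medskip

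\noindent\textbf{Proof plan.} The strategy is to build a quadrangular 2-cell embedding of $G[:]$ explicitly and to identify the genus via Euler's formula. Write $n = |V(G)|$ and $m = |E(G)|$, so $\beta(G) = m - n + 1$, and note that $|V(G[:])| = 2n$ and $|E(G[:])| = 4m$. Any 2-cell embedding of $G[:]$ all of whose faces are $4$-gons must have $|F| = 2m$ and Euler characteristic $2(n - m) = 2(1 - \beta(G))$, which on a closed orientable surface forces genus exactly $\beta(G)$. It therefore suffices to exhibit one quadrangular 2-cell embedding of $G[:]$ on some orientable surface.

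To construct such an embedding, I would fix any rotation system $\rho = \{\rho_v\}_{v \in V(G)}$ on $G$, writing $\rho_v = (w_1, w_2, \ldots, w_{d_v})$ at each vertex $v$. For each vertex $v$ and each pair of neighbors $(w_i, w_{i+1})$ consecutive in $\rho_v$ (indices mod $d_v$), declare the $4$-cycle
\[
 Q(v; w_i, w_{i+1}) \;=\; w_i' \,-\, v' \,-\, w_{i+1}'' \,-\, v'' \,-\, w_i'
\]
to be a face. Its four edges --- two copy edges ($w_i' v'$ and $w_{i+1}'' v''$) and two crossing edges ($v' w_{i+1}''$ and $v'' w_i'$) --- are all present in $G[:]$ by its definition, so each $Q$ is a genuine $4$-cycle, and the total number of such faces is $\sum_v d_v = 2m$, matching Euler's demand. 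A short case analysis on the four edge types of $G[:]$ (copy edges $v'w'$, $v''w''$ and crossing edges $v'w''$, $v''w'$) then shows that every edge of $G[:]$ lies on exactly two of these $4$-cycles, one based at each endpoint of the underlying edge $vw$ of $G$.

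The heart of the argument is to check that these face-edge incidences assemble into a bona fide orientable $2$-cell embedding. Reading off the pairs of edges that meet at each $v' \in V(G[:])$ from the faces gives adjacency constraints on the $2d_v$ edges incident to $v'$; the main obstacle is to verify that these constraints close into a \emph{single} cyclic permutation at $v'$ (and similarly at $v''$) rather than several, so that every vertex of $G[:]$ is a manifold point rather than a pinch point. I would check this by directly tracing the link graph at $v'$ determined by the constraints, which closes into one cycle of length $2d_v$. Orientability then follows from the symmetric treatment of the $'$- and $''$-copies, and the face-tracing rule of the resulting rotation system reproduces the faces $Q(v; w_i, w_{i+1})$. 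Euler's formula from the first paragraph then identifies the surface as $\Sigma_{\beta(G)}$, proving the theorem.
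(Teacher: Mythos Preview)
The paper does not give its own proof of this theorem: it is quoted as a known result of White and Craft, with citations, and is used as a black box throughout. So there is no ``paper's proof'' to compare against.

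That said, your construction is essentially the standard one behind the cited results, and it is sound. A couple of remarks on the places you left sketchy. First, your link trace at $v'$ does close up in one cycle: with neighbours $w_1,\dots,w_{d}$ in the chosen rotation at $v$, the corners you listed force the cyclic order
\[
w_1',\,w_1'',\,w_{d}',\,w_{d}'',\,w_{d-1}',\,w_{d-1}'',\,\dots,\,w_2',\,w_2''
\]
around $v'$, a single $2d$-cycle as required (and symmetrically at $v''$). Second, for orientability it is cleanest to orient each face as the directed $4$-cycle $w_i'\to v'\to w_{i+1}''\to v''\to w_i'$ and check that every edge of $G[:]$ is traversed once in each direction by the two faces containing it; this is a short case check over the four edge types and is exactly what ``symmetric treatment of the $'$- and $''$-copies'' amounts to. With those two points made explicit, your argument is complete and yields the quadrangular $2$-cell embedding of $G[:]$ on $\Sigma_{\beta(G)}$.
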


Any quadrangulation ${G[:]} \hookrightarrow \Sigma_{\beta(G)}$ is called a {\it spinal quadrangulation} with {\it spine} $G$ and {\it genus} $g=\beta(G)$. By Theorem 2.1, the genus of a spinal quadrangulation is equal to the Betti number of the spine. 

\section{Minimal quadrangulations}
\label{2}
We start up  with two corollaries of Theorem 2.1.

\begin{corollary}
For any integer $p \ge 2$ there exists a quadrangulation ${K_p[:]} \hookrightarrow \Sigma_g$ with $g = \beta(K_p)=\frac 1 2 (p-1)(p-2)$.
\end{corollary}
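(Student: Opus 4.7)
The plan is to derive this directly from Theorem 2.1 by specializing the spine to the complete graph $G = K_p$. The only things to check are that $K_p$ satisfies the hypotheses of Theorem 2.1 and that its Betti number evaluates to $\tfrac{1}{2}(p-1)(p-2)$.

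First I would observe that for $p \ge 2$ the complete graph $K_p$ is connected and non-trivial (it has at least one edge), so Theorem 2.1 applies and yields a quadrangulation $K_p[:] \hookrightarrow \Sigma_{\beta(K_p)}$.

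Next I would compute the Betti number of $K_p$ from the definition $\beta(G) = |E(G)| - |V(G)| + 1$, using $|V(K_p)| = p$ and $|E(K_p)| = \binom{p}{2}$:
\begin{equation*}
\beta(K_p) = \binom{p}{2} - p + 1 = \frac{p(p-1)}{2} - (p-1) = \frac{(p-1)(p-2)}{2},
\end{equation*}
which matches the value already stated in Section 2 for $\beta(K_p)$. Substituting this into the genus subscript $\Sigma_{\beta(K_p)}$ produced by Theorem 2.1 gives the desired quadrangulation $K_p[:] \hookrightarrow \Sigma_g$ with $g = \tfrac{1}{2}(p-1)(p-2)$.

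There is no real obstacle here: the statement is a corollary in the strict sense, obtained by plugging $G = K_p$ into the White–Craft theorem and reading off the genus as an algebraic identity. The only points worth mentioning are the trivial verification that $K_p$ is connected and non-trivial for $p \ge 2$, and the one-line simplification of $\beta(K_p)$.
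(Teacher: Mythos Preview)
Your proof is correct and is exactly the intended argument: the paper states this as an immediate corollary of Theorem~2.1 with $G=K_p$, relying on the already-noted identity $\beta(K_p)=\tfrac12(p-1)(p-2)$, and gives no separate proof.
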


\begin{corollary}
Let $p$ be an integer $\ge3$ and let $(K_p-e)$ be any graph formed by deleting an edge from $K_p$. Then there exists a quadrangulation ${(K_p-e)[:]} \hookrightarrow \Sigma_g$ with $g=\beta(K_p)-1$.
\end{corollary}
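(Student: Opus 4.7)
The plan is to apply Theorem 2.1 directly to the graph $G = K_p - e$, so the work reduces to two routine verifications followed by a one-line application.

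First, I would check that $K_p - e$ satisfies the hypotheses of Theorem 2.1, namely that it is non-trivial and connected. Since $p \ge 3$, the graph $K_p - e$ has $p \ge 3$ vertices and $\binom{p}{2} - 1 \ge 2$ edges, so it is certainly non-trivial. For connectivity, let $e = uv$; then for any third vertex $w$ (which exists because $p \ge 3$) the path $u$-$w$-$v$ survives in $K_p - e$, so any two vertices of $K_p - e$ are joined by a path through $K_p - e$.

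Second, I would compute the Betti number. The vertex sets of $K_p$ and $K_p - e$ coincide, and the edge count drops by exactly one, so the formula $\beta(H) = |E(H)| - |V(H)| + 1$ immediately gives
$$\beta(K_p - e) = \bigl(|E(K_p)| - 1\bigr) - p + 1 = \beta(K_p) - 1.$$

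Finally, applying Theorem 2.1 to $G = K_p - e$ produces a quadrangulation $(K_p - e)[:] \hookrightarrow \Sigma_{\beta(K_p - e)} = \Sigma_{\beta(K_p) - 1}$, which is precisely what the corollary asserts. There is no genuine obstacle in this argument; the corollary is essentially a bookkeeping consequence of Theorem 2.1, with the only point requiring care being the observation that $K_p - e$ remains connected when $p \ge 3$, so that the hypothesis of Theorem 2.1 is actually met.
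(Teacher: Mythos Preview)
Your proof is correct and is exactly the approach the paper takes: Corollary~3.2 is stated without proof as an immediate consequence of Theorem~2.1 (the case $m=1$ of Lemma~3.3), and your verification that $K_p-e$ is non-trivial, connected, and has Betti number $\beta(K_p)-1$ is precisely what is needed to invoke that theorem.
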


The quadrangulations of Corollaries 3.1 and 3.2 were first discovered (in terms of general octahedral graphs) by Hartsfield and Ringel \cite{hr}, who also showed their minimality for $\Sigma_g$ whenever $p \ge 4$ for Corollary 3.1, and $p \ge 8$ for Corollary 3.2. These corollaries are special cases of Lemma 3.3, stated shortly, and correspond to the particular cases $m=0$ and $m=1$, respectively. Using the method of current graphs for $m \ge 2$ would have been very complicated, so Hartsfield and Ringel had to stop at $m=1$. In contrast, the spinal method enables generalization of the results of \cite{hr} to an arbitrary $m$ not exceeding $\frac 14 p-1$. This demonstrates one way in which the spinal method is useful.

\begin{lemma}
Let $(K_p-me)$ be any graph formed by deleting $m$ edges from $K_p$ ($0 \le m \le \beta(K_p)$). If $(K_p-me)$ is connected, there exists a quadrangulation ${(K_p-me) [:]} \hookrightarrow \Sigma_g$ with $g = \beta(K_p)-m$. Moreover, any such quadrangulation is minimal for $\Sigma_g$ with $g \ge 1$ whenever $p \ge 4(m+1)$.
\end{lemma}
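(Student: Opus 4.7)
My plan is to separate the lemma into an existence claim and a minimality claim, and treat them essentially independently. The existence part will fall out directly from the White--Craft theorem: I would first check that removing $m$ edges from $K_p$ drops the first Betti number by exactly $m$, namely $\beta(K_p - me) = \binom{p}{2} - m - p + 1 = \beta(K_p) - m$. Since $K_p - me$ is connected by hypothesis, Theorem 2.1 immediately delivers a spinal quadrangulation $(K_p - me)[:] \hookrightarrow \Sigma_{\beta(K_p) - m}$, and by the definition of 2-fold interlacement this quadrangulation has exactly $|V((K_p - me)[:])| = 2p$ vertices.

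For the minimality statement, I would take an arbitrary quadrangulation $H \hookrightarrow \Sigma_g$ with $n$ vertices, $e$ edges, and $f$ faces. Because every face is a 4-cycle and each edge is shared by two faces, $2e = 4f$; combining this with Euler's formula $n - e + f = 2 - 2g$ gives $e = 2n - 4 + 4g$. The 1-skeleton of a polyhedron is a simple graph, so $e \le \binom{n}{2}$, and substituting yields
\[
n^2 - 5n + 8 - 8g \ge 0.
\]
Plugging in $g = \beta(K_p) - m = \tfrac{1}{2}(p-1)(p-2) - m$ and evaluating at the candidate $n = 2p - 1$, the inequality reduces to $8m + 6 \ge 2p$, which is violated exactly when $p \ge 4(m+1)$. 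Since the quadratic $n^2 - 5n$ is monotone increasing on $n \ge 3$ and every quadrangulation has $n \ge 4$, failure at $n = 2p - 1$ propagates to every $n \le 2p - 1$, and we conclude $n \ge 2p$. The spinal quadrangulation from the existence step already realizes $n = 2p$, so it is minimal.

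The argument is essentially mechanical once the Euler count is set up, and I do not expect a genuine obstacle. The two places that warrant care are confirming that the polyhedron definition in the paper genuinely forbids loops and parallel edges (so that $e \le \binom{n}{2}$ is legitimate), and checking the off-by-one calibration so that $p \ge 4(m+1)$ really emerges as the sharp threshold rather than $p \ge 4m+3$. The hypothesis $g \ge 1$ is automatic under $p \ge 4(m+1)$, which already forces $g \ge 3$, so no separate treatment of the low-genus cases is needed.
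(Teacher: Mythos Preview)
Your proof is correct and follows essentially the same route as the paper: both derive the inequality $n^2 - 5n + 8 - 8g \ge 0$ from Euler's formula together with $4f = 2e$ and $e \le \binom{n}{2}$, and both use Theorem~2.1 for existence. The only cosmetic difference is that the paper solves the quadratic explicitly to obtain $\alpha_0 \ge \big\lceil \tfrac{1}{2}(5+\sqrt{32g-7})\big\rceil$ and then checks the double inequality $2p-1 < \tfrac{1}{2}(5+\sqrt{32g-7}) \le 2p$, whereas you evaluate the quadratic directly at $n = 2p-1$ and invoke monotonicity; both yield the threshold $p \ge 4(m+1)$.
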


\begin{proof}
Compute $g=\beta(K_p-me) = \frac12 (p-1)(p-2)-m$, and the existence follows from Theorem 2.1. It remains to prove minimality of the quadrangulation constructed. Let $\alpha_0$, $\alpha_1$, $\alpha_2$ denote the number of vertices, edges, and regions of an arbitrary quadrangulation on $\Sigma_g$. By Euler's equation, we have $\alpha_0 - \alpha_1 + \alpha_2 = 2-2g$. Furthermore, since any pair of vertices are joined by at most one edge, we have $\alpha_1 \le {\alpha_0\choose 2}$, and since each edge meets exactly two regions, we have $4\alpha_2 = 2\alpha_1$. From these it can be derived that $\alpha_0^2 –-5\alpha_1 +(8-8g) \ge 0$. This quadratic inequality has the solution:
\begin{equation}
\alpha_0 \ge \Big\lceil \frac 12 \Big{(}5+\sqrt{32g-7} \Big{)} \Big\rceil.
\end{equation}
With $g$ computed in the beginning of the proof, we find
\begin{equation}
32g-7 = 16p^2 -48p+25-32m. 
\end{equation}
Now, since the constructed quadrangulation has $2p$ vertices, it follows from Eqs. (3) and (4) that it is minimal for $\Sigma_g$ whenever the following double inequality holds:   
$$
2p-1 < \frac12 \Big{(} 5+\sqrt{16p^2-48p+25-32m} \Big{)} \le 2p.
$$
This can be rewritten as $16p^2-56p+49<16p^2-48p+25-32m\le16p^2-40p+25$, or as
$$
\left\{     \begin{array}{lr}       8p>24+32m,\\      8p \ge -32m.    \end{array}   \right.
$$
Since the second inequality is a tautology, we conclude that $p>3+4m$, hence $p\ge4+4m$.         
The proof is complete. 
\end{proof}

Lemma 3.3 provides infinitely many new minimal quadrangulations (for infinitely many values of $g$) not covered by Hartsfield and Ringel \cite{hr}. The new minimal quadrangulations correspond to the values of $m$ satisfying the double inequality: $2 \le m \le \frac 14 p -1$. For example, the quadrangulations ${K_{12}[:]} \hookrightarrow \Sigma_{55}$, ${(K_{12}-e)[:]} \hookrightarrow \Sigma_{54}$, and ${(K_{12}-2e)[:]} \hookrightarrow \Sigma_{53}$ are all minimal for the corresponding 2-manifolds (and have 132, 130, and 128 regions, respectively). The first two are covered by \cite{hr} (or Corollaries 3.1 and 3.2), but the minimal quadrangulation on $\Sigma_{53}$ is a new one. Note that a quadrangulation is minimal in the sense of \cite{hr} if it has the minimum number of regions, but since the definition assumes the 2-manifold is fixed, their definition of a minimal quadrangulation agrees with the one given in the beginning of this note.

The construction using spines creates quadrangulations of an easily controlled order. 

\begin{corollary}
For any integers $g \ge 0$ and $p \ge 2$ such that $g \le \beta(K_p)$, there exists a spinal quadrangulation on $\Sigma_g$ with order $2p$.
\end{corollary}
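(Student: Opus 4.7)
The plan is to exhibit, for any $p \ge 2$ and any $0 \le g \le \beta(K_p)$, a connected graph $G$ on $p$ vertices with $\beta(G) = g$; then Theorem 2.1 immediately yields a spinal quadrangulation $G[:] \hookrightarrow \Sigma_{\beta(G)} = \Sigma_g$, and by the definition of the 2-fold interlacement this quadrangulation has exactly $|V(G')| + |V(G'')| = 2p$ vertices.

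The construction of such a $G$ is straightforward and forms the heart of the proof. First I would take a spanning tree $T$ on $p$ labelled vertices (say the path $P_p$), noting that $T$ is connected and satisfies $\beta(T) = (p-1) - p + 1 = 0$. Since the Betti number of a connected graph on a fixed vertex set increases by exactly $1$ each time a new edge is appended (the vertex count stays fixed while the edge count grows by one), I would then add edges to $T$ one at a time, choosing at each step any pair of non-adjacent vertices. After $g$ such additions I obtain a connected graph $G$ on $p$ vertices with $\beta(G) = g$. The hypothesis $g \le \beta(K_p)$ guarantees there are enough non-edges available in $K_p$ to carry out all $g$ steps, since the maximum possible Betti number of a graph on $p$ vertices is attained by $K_p$.

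The only thing to verify beyond this is that connectedness is preserved, which is automatic: adding edges cannot disconnect a connected graph. Applying Theorem 2.1 to the resulting $G$ then completes the argument.

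I do not foresee any genuine obstacle; the statement is essentially an observation that the Betti number of a connected graph on $p$ vertices attains every integer value in $\{0, 1, \dots, \beta(K_p)\}$, combined with the vertex-count identity $|V(G[:])| = 2|V(G)|$ built into the definition of $G[:]$.
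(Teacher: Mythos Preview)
Your argument is correct and follows essentially the same route as the paper: both proofs produce a connected graph on $p$ vertices with Betti number $g$ and then invoke Theorem~2.1, the only cosmetic difference being that the paper deletes $\beta(K_p)-g$ edges from $K_p$ (via Lemma~3.3) while you add $g$ edges to a spanning tree. The two constructions yield the same family of spines and the same conclusion.
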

\begin{proof}
The order of a spinal quadrangulation is twice the order of its spine. By Lemma 3.3, we can take $(K_p-me)$ as a spine, letting $m$ be $(\beta(K_p)-g)$. Clearly, it is possible to remove this number of edges from $K_p$ so that the remaining graph is still connected.
\end{proof}

The spectrum $\{2p\}$ described in Corollary 3.4 for the possible orders is full for fixed genus; there are infinitely many spinal quadrangulations of arbitrarily large even order in each 2-manifold $\Sigma_g$. Solving the quadratic equation $\beta(K_p)=g$ for $p$, we come to the following comprehensive formula for the order of a minimal spinal quadrangulation with genus $g \ge 0$: 
\begin{equation}
{|V|_{\min}^{{\rm{sp}}}(g)} = 2 \Big{\lceil} \frac {3+\sqrt{8g+1}} 2 \Big{\rceil}
\end{equation}
and therefore
$$
{|V|_{\min}(g)} \le 2 \Big{\lceil} \frac {3+\sqrt{8g+1}} 2 \Big{\rceil}.
$$

For example, ${|V|_{\min}^{{\rm{sp}}}(0)} ={|V|_{\min}(0)} =4$ by Eq.~(5). It should be noted that Hartsfield and Ringel \cite{hr} assert that ${|V|_{\min}(0)} =8$ because their definition of a quadrangulation, in comparison to the definition given in the beginning of this note, has an additional requirement as follows: the intersection of any two distinct regions is either empty or at most one edge and at most three vertices. However, any minimal quadrangulation in the sense of our definition has no vertices of degree 2 whenever $g\ge1$, and therefore satisfies this requirement. By Eq.~(5), ${|V|_{\min}^{{\rm{sp}}}(1)} =6$ and ${|V|_{\min}^{{\rm{sp}}}(2)} =8$, whereas it is shown in \cite{hr} that ${|V|_{\min}(1)} =5$ and ${|V|_{\min}(2)} =7$.

We now prove Theorem 1.1 stated in the Introduction.

\begin{proof}{\it (for Theorem 1.1)}
Denote $\frac12 (3+\sqrt{8g+1})$ by $a(g)$ and denote $\frac14 (7+\sqrt{32g-15})$ by $b(g)$. It follows from Lemma 3.3 that ${|V|_{\min}(g)} = {|V|_{\min}^{{\rm{sp}}}(g)}=2p$ whenever this double inequality holds: $\beta(K_p)+1- \frac 14 p \le g \le \beta(K_p)$ or, equivalently, $a(g) \le p \le b(g)$.
Note that whenever $g\ge3$ (and so $p \ge 4$) we have $a(g) \le b(g)$, and also observe that the closed interval $[a(g), b(g)]$ has length $<1$ and can contain at most one integer, $\ell(g)$. Furthermore, such an $\ell(g)$ exists if $\lceil a(g) \rceil = \lfloor b(g)\rfloor$ ($=\ell(g)$), in which case we have ${|V|_{\min}(g)} = {|V|_{\min}^{{\rm{sp}}}(g)} =2\ell(g)$. 
\end{proof}











\bibliographystyle{model1-num-names}
\bibliography{<your-bib-database>}

\end{document}